\documentclass[11pt,fleqn]{amsart}
\pagestyle{plain}

\addtolength{\hoffset}{-48pt}
\addtolength{\textwidth}{96pt}
\addtolength{\voffset}{-32pt}
\addtolength{\textheight}{65pt}
\addtolength{\textheight}{12pt}

\usepackage{latexsym,amsmath,amssymb,amsfonts,MnSymbol,epsfig}
\usepackage[initials]{amsrefs}
\usepackage[all]{xy}



\providecommand{\lra}{\longrightarrow}

\providecommand{\CC}{{\mathbb{C}}}
\providecommand{\RR}{{\mathbb{R}}}






\providecommand{\HH}{{\mathcal H}}
\providecommand{\KK}{{\mathcal K}}
\providecommand{\LL}{{\mathcal L}}



\newcommand{\ang}[1]{\langle #1 \rangle} 


\newtheorem{definition}{Definition}
\newtheorem{lemma}[definition]{Lemma}

\newtheorem{corollary}[definition]{Corollary}
\newtheorem{theorem}[definition]{Theorem}

\begin{document}

\title{An Index Formula for the Extended Heisenberg Algebra of Epstein, Melrose and Mendoza}
\author{Erik van Erp}
\address{Department of Mathematics, Dartmouth College, 27 N. Main St., 
Hanover, New Hampshire, USA}
\email{jhamvanerp@gmail.com}

\begin{abstract}
The extended Heisenberg algebra for a contact manifold has a symbolic calculus that accommodates both Heisenberg pseudodifferential operators as well as classical pseudodifferential operators. 
We derive here a formula for the index of Fredholm operators in this extended calculus. 
This formula incorporates in a single expression the Atiyah-Singer formula for elliptic operators, as well as Boutet de Monvel's Toeplitz index formula. 
\end{abstract}

\maketitle

\section{Introduction}

In \cite{Me97} Epstein, Melrose and Mendoza introduce the {\em extended Heisenberg algebra} for a closed contact manifold. 
Their extended algebra contains, as subalgebras, both the classical pseudodifferential operators as well as operators in the Heisenberg calculus,
and is, more or less, generated by these two subalgebras.
(A similar algebra had been defined by Taylor in \cite{Ta84}.)
Classical elliptic operators as well as certain hypoelliptic operators associated to the contact structure (like Toeplitz operators) have an invertible symbol in the extended calculus. 

In \cite[section 8]{Me97} a solution to the Fredholm index problem for the extended Heisenberg calculus is sketched out.
Details of the proposed solution were worked out in the manuscript \cite{Epxx}, but the results were not quite satisfying,
and the manuscript \cite{Epxx} remains, unfortunately, unpublished.
The resulting explicit formula is more complicated than the brief remarks in \cite{Me97} would lead one to believe.

In \citelist{\cite{vE10a} \cite{vE10b}} we presented a different approach to this index problem. 
We obtained an explicit formula for the Fredholm index in the Heisenberg calculus proper that was certainly much simpler than the final formula arrived at in \cite{Epxx}.
Our results (based on an adaptation of  Connes' tangent groupoid approach) were arguably more satisfactory than those achieved in \cite{Epxx}.

In the present paper we take a fresh look at this problem.
We are now able to generalize our explicit formula \cite[Theorem~6]{vE10b} from the Heisenberg algebra to the extended Heisenberg algebra.
As indicated in \cite{Me97}, the resulting formula unites, in a single expression, the index formulas of Atiyah-Singer (for standard elliptic pseudodifferential operators) and the Toeplitz index theorem of Boutet de Monvel.

Our approach here does not rely on the tangent groupoid.
Our method here is closer in spirit to what was proposed originally in \cite{Me97}.
In particular, we show here how the index problem for the extended Heisenberg algebra can be reduced (in two steps) to Boutet de Monvel's theorem.
A key intermediate step is the reduction from the extended Heisenberg algebra to the ideal of Hermite operators.
The reduction of the Hermite problem to the Toeplitz problem was already worked out, in detail, in \cite{Epxx}. 
A basic simplification over the method used in \cite{Epxx} is obtained if we work in the $C^*$-algebra formalism, because now symbols are only required to be continuous. 
The Hermite ideal, in our approach, is simply defined by symbols that reduce to the identity on the equator, rather than having to osculate the identity to high order, as in the work of Epstein and Melrose.
As we will see below, the reduction of the Hermite index problem to the Toeplitz index problem becomes, in the $C^*$-algebraic formalism, a straightforward argument in analytic $K$-homology. 

The more interesting problem is how to reduce the full problem to the Hermite problem.
We achieve this by means of a crucial trick involving the transposition of operators.
This use of transposition is, to our knowledge, a novel idea in the context of index theory.
The resulting formula, Theorem \ref{theorem}, is remarkably straightforward, especially when compared with the formulas in \cite{Epxx}.
A curious feature of our method here is that it does {\em not} extend to operators acting on vector bundles.

\section{Pseudodifferential calculi}\label{section:two}

Our primary reference for the extended Heisenberg calculus are \cite{Me97} and \cite{Epxx}.
For convenience of the reader, we provide in this section a brief sketch of the main features of this calculus.
We also consider the structure theory of the $C^*$-algebra that is the closure of its symbol algebra.

\vskip 6pt

A pseudodifferential operator on a smooth manifold $M$ (in any calculus) is pseudolocal, which means that its Schwartz kernel $k(x, y)$ is smooth ($C^\infty$) away from the diagonal $D=\{(x,x)\in M\times M\}$. 
In other words, the singular support of the Schwartz kernel $k$ of a pseudodifferential operator is contained in $D$. 
On a closed manifold there is no issue with the `proper support' of the kernel $k$. 
Therefore, to specify a pseudodifferential calculus on a closed manifold amounts to finding a way to control the asymptotics of the singularity of $k$ in the direction transversal to the diagonal.
It is convenient to describe these asymptotics in terms of the Fourier transform of the kernel $k$ in transverse directions.

In what follows $M$ is a smooth closed manifold with a contact hyperplane bundle $H\subset TM$.
We denote by $N=TM/H$ the quotient line bundle, and assume that it is a trivial line bundle.

Let $U$ be a neighborhood of the diagonal $D$ in $M\times M$.
Choose connections $\nabla^H$ and $\nabla^N$ in $H$ and $N$ respectively.
We pick a section $N\subset TM$, so that $TM=H\oplus N$.
Then let $\nabla=\nabla^H+\nabla^N$ be a connection on $TM$.
We are interested in the exponential map ${\exp}^\nabla$ associated to this connection $\nabla$,
\[ h\;\colon\; T_xM\to M\times M\;\colon\; (x, v)\mapsto ({\rm exp}^\nabla_x(v/2), {\rm exp}^\nabla_x(-v/2)).\]
Let $\phi\in C_c^\infty(U)$ be a function supported in $U$ that is equal to $1$ in a neigborhood of $D$.
We use the map $h$ to pull back the distribution $\phi k$ to $TM$.
The Fourier transform
\[ a(x, \xi) = \int_{y\in T_xM} e^{-i\ang{y,\,\xi}} \,h_*(\phi k(x,y))\]
is the {\em amplitude} of an oscillatory integral operator whose Schwartz kernel agrees with $k$ up to a smoothing operator.
The amplitude $a(x,\xi)$ is a function on the cotangent bundle $T^*M$,
and the asymptotics of the kernel $k(x,y)$ near the diagonal $D$ are reflected in the asymptotics at infinity of the amplitude.

Let $D^*M$ denote the co-disk bundle of $M$, i.e., the radial compactification of $T^*M$.
The space $D^*M$ is a smooth manifold with boundary.
A defining function for its boundary is given by
\[ \rho_R(\xi) = (1+\|\xi\|^2)^{-1/2}.\]
The function $\rho_R$ defines a transversal coordinate (with values in $[0, \infty)$) in a neighborhood of the boundary $S^*M$ of $D^*M$,
such that $\rho_R^{-1}(0)$ is precisely the boundary $S^*M$.
Moreover, the coordinate $\rho_R$ can be completed to a full chart (with values in a half-space $[0,\infty)\times \RR^N$) near any point on the boundary.
In this way the function $\rho_R$ defines the structure of $D^*M$ as a smooth manifold with boundary. 

An amplitude $a(x, \xi)$ corresponds to a {\em classical} pseudodifferential operator of order $m$ precisely if $\rho_R(\xi)^{m} a(x,\xi)$ extends to a smooth function on $D^*M$.
The restriction of $\rho_R(\xi)^{m} a(x,\xi)$ to the boundary $S^*M$ is the principal symbol of the operator,
which is well-defined (independent of the choice of metric or cut-off function $\phi$) as a smooth function on $S^*M$.
The principal symbol corresponds to the leading term in the Taylor expansion of $a(x,\xi)$ near the boundary $S^*M$.
If $P$ is a classical pseudodifferential operator of order $m$,
then we denote its principal symbol by $\sigma^m(P)$.
\vskip 6pt

The Heisenberg calculus can be obtained in a similar way, by choosing a different compactication of $T^*M$.
Let $H\subset TM$ denote a contact hyperplane bundle on $M$.
The dual $N^*$ of the normal bundle $N=TM/H$ is a line subbundle of $T^*M$.
Choose a splitting $T^*M\cong H^*\oplus N^*$,
and denote by $\xi=(\xi_H, \xi_N)$ the corresponding decomposition of covectors.
Let $\rho_H$ be the smooth function on $T^*M$ defined by
\[ \rho_H(\xi) = (1+\|\xi_H\|^4+\|\xi_N\|^2)^{-1/4}.\]
Just as before, the function $\rho_H$ defines a compactification for $T^*M$,
and providing it with the structure of a smooth manifold with boundary.
It may not be immediately evident, but the result is independent of the choice of section $H^*\subset T^*M$.
We denote the resulting compactification by $D^*_HM$, and its boundary by $S^*_HM$.
As a smooth manifold with boundary $D^*_HM$ is diffeomorphic to $D^*M$,
but not naturally.
Epstein, Melrose and Mendoza refer to this compactification as the {\em parabolic compactification} of $T^*M$.
The name is derived from the fact that the parabolic cosphere bundle $S^*_HM$ can be identified with the manifold of parabolic rays of the form $\{(t\xi_H, t^2 \xi_N), t>0\}$ in $T^*M$,
just as the radial boundary $S^*M$ is the manifold whose points are the `straight line' rays $\{(t\xi_H, t\xi_N), t>0\}$.

Now we proceed as before.
An operator is a pseudodifferential operator of order $m$ in the {\em Heisenberg calculus} precisely if its amplitude $\rho_H(\xi)^{m}a(x,\xi)$ extends to a smooth function on the parabolic compactification $D^*_HM$.
The restriction of $\rho_H(\xi)^{m}a(x,\xi)$ to the boundary sphere bundle $S^*_HM$ is independent of choices, and determines the principal symbol of the operator in the Heisenberg calculus.
If $P$ is a Heisenberg pseudodifferential operator of order $m$,
we denote its principal symbol in the Heisenberg calculus by $\sigma_H^m(P)$.
\vskip 6pt

Before we describe the algebraic structure of the symbolic calculus,
we need to describe some geometric structure on the parabolic cosphere bundle $S^*_HM$.
The intersection of the dual of the hyperplane bundle $H^*\subset D^*_HM$ with the boundary sphere $S^*_HM \cap H^*$ is the sphere bundle $S^*H$ of $H^*$.
We refer to the spheres in $S^*H$ as the {\em equatorial spheres} in the fibers of $S^*_HM$.
We assume that the contact manifold $(M, H)$ is {\em co-oriented},
so that the submanifold $S^*H$ divides $S^*_HM$ into two components.
We refer to the two components of $S^*_HM\setminus S^*H$ as the {\em upper} and {\em lower hemispheres},
and denote them by $S^*_{H,\pm}M$.
We have the disjoint union
\[ S^*_HM = S^*_{H,+}M \sqcup S^*_{H,-}M \sqcup S^*H.\]
Further structure is introduced if we choose a {\em contact  $1$-form} $\theta$ on $M$, compatible with the co-orientation.
Then $d\theta$ restricts to a symplectic form in the fibers of $H$,
which induces a symplectic form on $H^*$ as well.
Observe also that the choice of $\theta$ amounts to choosing a trivialization of $N^*$, i.e., we may identify $N^*_x=\RR$.
Each point in the upper hemisphere $S^*_{H, +}M_x$ (at a point $x\in M$) corresponds to a parabolic ray $\{(t\xi_H,t^2\xi_N), t>0\}$ in $T^*_xM$ with positive value $\xi_N>0$.
Each such ray intersects the hyperplane $H_x^*\times \{1\}$ in exactly one point. 
We thus have a natural diffeomorphism
\[ S^*_{H,+}M \cong H^*.\]
The same holds for the lower hemisphere.

\vskip 6pt
An integral part of any pseudodifferential theory is a symbolic calculus.
For classical pseudodifferential operators the principal symbol calculus is {\em commutative}.
If $P$ and $Q$ are two pseudodifferential operators of order $m, m'$,
then $PQ$ is a classical pseudodifferential operator of order $m+m'$,
and the principal symbols are multiplied pointwise as functions on $S^*M$,
\[ \sigma^{m+m'}(PQ) = \sigma^m(P) \sigma^{m'}(Q).\]
The Heisenberg symbolic calculus is more delicate.
A detailed description is found in \cite{Epxx}, but this manuscript is unpublished.
Other good references are available \citelist{\cite{BG88} \cite{CGGP92} \cite{Ta84}}.
We provide an overview.
If $\sigma_H(P)$ denotes the Heisenberg symbol of an operator $P$,
represented as a function on the parabolic sphere bundle $S^*_HM$,
then we denote by $\sigma_{H, +}(P)$ resp. $\sigma_{H, -}(P)$ the restriction of $\sigma_H(P)$ to the upper and lower hemisphers respectively.
The restriction of $\sigma_H(P)$ to the equator $S^*H$ is denoted by $\sigma_{H, 0}(P)$.
Observe that $\sigma_{H,0}$ is the common boundary value of $\sigma_{H,+}$ and $\sigma_{H,-}$.

The equatorial symbol $\sigma_{H,0}$ multiplies pointwise as a function on the cosphere bundle $S^*H$,
\[ \sigma^{m+m'}_{H,0}(PQ) = \sigma^m_{H,0}(P)\, \sigma^{m'}_{H,0}(Q).\]
But symbolic multiplication on the hemispheres is not commutative.
A key fact is that the upper and lower hemispheres are algebraically {\em independent}.
Multiplication on the upper hemisphere is denoted by $\#_+$,
while $\#_{-}$ denotes multiplication on the lower hemisphere,
\begin{align*}
\sigma^{m+m'}_{H,+}(PQ) &= \sigma^m_{H,+}(P)\,\#_+\, \sigma^{m'}_{H,+}(Q),\\
\sigma^{m+m'}_{H,-}(PQ) &= \sigma^m_{H,-}(P)\,\#_-\, \sigma^{m'}_{H,-}(Q).
\end{align*}
Identifying the hemispheres with $H^*$, as explained above, the sharp product $\#$ is the familiar composition formula for symbols in the {\em Weyl algebra} associated to a symplectic vector space,  
\[ a\#_{\pm} b (\xi) = \pi^{-2n}\int e^{\pm 2i\,d\theta(u, v)} a(\xi+u)b(\xi+v)dudv.\]
We shall not make explicit use of this formula,
but at this point it is useful to observe that any linear map in the fibers of $H^*$ that preserves the symplectic form $d\theta$ induces an {\em automorphism} of the symbol algebra. 
This fact will be crucial in the proof of our main theorem.

In the literature on the Heisenberg calculus it is not customary to consider the $C^*$-algebraic point of view, which is not useful when one is interested in regularity properties of operators. 
But for the purposes of index theory the $C^*$-algebraic completion of the various algebras is easier to understand and more convenient to work with.
We will therefore describe the appropriate $C^*$-algebraic completion $S_H$ of the Heisenberg symbol algebra.
Abstractly, this $C^*$algebra is the quotient of the norm-completion $\Psi_H$ of the algebra of order zero Heisenberg operators (realized as bounded operators on $L^2(M)$) by the ideal of compact operators $\KK$,
\[ 0\to \KK\to \Psi_H \to S_H \to 0.\]
This shows that there is a canonical $C^*$-norm on the algebra of Heisenberg symbols.
We have given an explicit description of this norm in \cite{vE10b}. 
To describe it, we start with the {\em Bargmann-Fok space} associated with the symplectic vector space $H^*_x$.
Choose a complex structure $J$ (with $J^2=-1$) in the fibers of $H^*$ that is compatible with the symplectic structure $d\theta$.
Let $H^{1,0}$ denote the $J=\sqrt{-1}$ eigenspace in the complexification $H\otimes \CC$.
The Bargmann-Fok space $V^{BF}_x$ at a point $x\in M$
is the Hilbert space of holomorphic functions $f$ on the complex vector space $H^{1,0}_x$ with norm $\|f\|^2 = \int |f|^2e^{-|z|^2}dz$.
Observe that $V^{BF}_x$ is the closure of the space of complex polynomials.
The family of Hilbert spaces $V^{BF}_x$ forms a continuous field of Hilbert spaces over $M$ (a Hilbert module over $C(M)$) which we denote by $V^{BF}$.

The algebra of Heisenberg symbols $\sigma_{H, +}$ (in the upper hemisphere)  
with its sharp product $\#_+$ has a natural representation on the Bargman-Fok space $V^{BF}_x$ (see \cite{Epxx}),
and the completion of the algebra of symbols in this norm is the {Weyl algebra} $\mathcal{W}_x$.
All we need to know for our purposes about this $C^*$-algebra $\mathcal{W}$ is that it fits in a long exact sequence
\[ 0\to\KK_x\to \mathcal{W}_x \to C(S^*_xH)\to 0.\]
The map $\mathcal{W}_x\to C(S^*_xH)$ is induced by the map which restricts a symbol $\sigma_{H,+}(P)$ in the upper hemisphere to its value at the boundary $S^*_xH$ (i.e., the equator).  
The kernel of this map completes to the $C^*$-algebra $\KK_x=\KK(V^{BF}_x)$ of compact operators on the Bargmann-Fok space $V^{BF}_x$.
The lower hemisphere gives rise to a similar situation.

The families of $C^*$-algebras $\KK_x$ and $\mathcal{W}_x$ for $x\in M$ form  continuous fields over $M$ in the obvious way.
We will denote the $C^*$-algebras of sections in these fields by $\KK(V^{BF})$ and $\mathcal{W}(V^{BF})$ respectively, to emphasize that they are realized concretely as families of operators on the Bargmann-Fok spaces.
We have the exact sequence
\[ 0\to \KK(V^{BF}) \to \mathcal{W}(V^{BF}) \stackrel{\sigma_0}{\lra} C(S^*H) \to 0.\]
The field $\{\KK_x,x\in M\}$ can of course be trivialized, since the Hilbert bundle $V^{BF}$ can be trivialized 
(like every Hilbert bundle with separably infinite dimensional fibers).
But this is not true for the field of Weyl algebras $\mathcal{W}_x$.

We have so far focused only on the upper hemisphere $S^*_{H,+}M$.
The $C^*$-algebra  $\mathcal{W}(V^{BF})$ is the completion of the algebra of symbols in the upper hemisphere.
The lower hemisphere gives an isomorphic $C^*$-algebra.
The two algebras are glued along the equator, and so the full Heisenberg symbol algebra completes to the $C^*$-algebra
\[ S_H = \{(a,b)\in \mathcal{W}(V^{BF})\oplus \mathcal{W}(V^{BF})\,\mid\, \sigma_0(a)=\sigma_0(b)\}.\]

\section{The extended Heisenberg calculus}\label{section:three}

The {extended Heisenberg algebra} is a pseudodifferential calculus introduced by Epstein, Melrose and Mendoza that contains {\em both} the classical and the Heisenberg pseudodifferential operators. 
It is obtained by choosing yet another compactification of $T^*M$.
The boundary $S^*M$ of the radial compactification $D^*M$
consists of straight rays $(t\xi_H, t\xi_N)$,
and the boundary $S_H^*M$ of the parabolic compactification $D^*_HM$
consists of parabolic rays $(t\xi_H, t^2\xi_N)$.
Every straight ray $(t\xi_H, t\xi_N)$ in $D^*_HM$ converges to the equatorial boundary in $S_H^*M$, unless $\xi_H=0$.
Likewise, every parabolic ray $(t\xi_H, t^2\xi_N)$ in $D^*M$ converges to one of the two points of intersection of $N^*$ with $S^*M$ (the two `poles' on the sphere), unless $\xi_N=0$.

This suggests an extended compactification of $T^*M$ whose boundary can be thought of as obtained from the radial boundary $S^*M$ by blowing up each of the two poles in the fibers to one of the (open) hemispheres of $S^*_HM$. 
Recall that each such hemisphere can be identified with $H^*_x$.
Alternatively, this extended boundary can be obtained from the parabolic boundary $S^*_HM$ by blowing up its equator $S^*H$ to $S^*M$ minus the poles.
The latter can be identified with $S^*H\times \RR$
(using a trivialization of $N^*$ by means of a contact form).
Let's denote this new boundary by $S^*_{eH}M$, and the corresponding compactification of $T^*M$ by $D^*_{eH}M$.
This extended compactification is a smooth manifold with corners.
We have a decomposition
\[ S^*_{eH}M = S^*_{H, +}M \sqcup S^*_{H,-}M \sqcup S^*H\times [-\infty, +\infty].\]
The corresponding pseudodifferential calculus is the {\em extended Heisenberg calculus}.
Operators in this calculus can have `mixed order'.
Let $\Psi_{eH}^{m,k}$ denote the set of extended Heisenberg operators of classical order $m$ (as measured by the growth rate of the amplitude $a(x,\xi)$ against the defining function $\rho_R$) and of Heisenberg order $k$ (now using $\rho_H$).
The principal symbol of such an operator we denote by $\sigma^{m,k}_{eH}$,
which is a smooth function on $S^*_{eH}M$.
This extended symbol can be restricted to three natural regions,
corresponding to the decomposition of $S^*_{eH}M$.
We denote the restriction of $\sigma_{eH}$ to the `classical' region $S^*H\times [-\infty, +\infty]$ by $\sigma_0$, the same notation we used before for the equatorial symbol. 
In the extended Heisenberg calculus the `classical' symbol $\sigma_0$ multiplies pointwise as a function on $S^*H\times [-\infty, +\infty]$.
The restrictions of the extended symbol $\sigma_{eH}$ to the Heisenberg region in $S^*_{eH}M$, i.e., the upper and lower hemispheres, we denote by $\sigma_{+}$ and $\sigma_{-}$, just as before. 
The symbols $\sigma_{\pm}$ multiply according to the sharp product $\#$, exactly  as in the Heisenberg calculus.
Observe that the values of $\sigma_{+}$ and $\sigma_{-}$ at the boundary $S^*H$ are equal to the values of $\sigma_{0}$ at its northern and southern boundaries, respectively.

The $C^*$-algebraic completion $S_{eH}$ of the extended Heisenberg symbol algebra
is very similar to the the symbol $C^*$-algebra $S_H$,
except that the equator is blown up.

Let $\Psi_{eH}$ denote the norm completion of $\Psi_{eH}^{0,0}$, the operators of order $(0,0)$ in the extended Heisenberg calculus.
The kernel of the principal symbol map $P\mapsto \sigma^{0,0}_{eH}(P)$
consists of the extended Heisenberg operators of order $(-1,-1)$.
Operators in $\Psi^{-1,-1}_{eH}$ are compact, 
while all smoothing operators are contained in $\Psi^{-1,-1}_{eH}$.
It follows, as usual, that the norm closure of $\Psi^{-1,-1}_{eH}$ is the $C^*$-algebra $\KK$ of compact operators on $L^2(M)$.
We therefore have the short exact sequence
\[ 0\to \KK\to \Psi_{eH} \stackrel{\sigma_{eH}}{\lra} S_{eH} \to 0,\]
where
\begin{align*}
S_{eH} = \{(a,b,c)&\in \mathcal{W}(V^{BF})\,\oplus\, \mathcal{W}(V^{BF})\,\oplus\, C(S^*H\times [-\infty,+\infty])\,\mid\, \\
&\sigma_0(a)=c(+\infty),\, \sigma_0(b)= c(-\infty)\}.
\end{align*}
Here $c(\pm\infty)\in C(S^*H)$ refers to the two boundary restrictions of the function $c\in C(S^*H\times [-\infty,+\infty])$.

The classical pseudodifferential operators are contained in this extended calculus.
Their extended symbol will simply have {\em constant} (i.e., scalar) values for 
$\sigma_{+}$ and $\sigma_{-}$.
Likewise, the Heisenberg pseudodifferential operators are contained in the extended calculus
as those operators for which $\sigma_{0}$ is the pullback to $S^*H\times [-\infty, +\infty]$ of the equatorial symbol of the Heisenberg operator.
Accordingly, the $C^*$-algebra $\Psi_{eH}$ is simply the norm closed subalgebra of bounded operators $\LL(L^2(M))$ generated by $\Psi^0$ and $\Psi_H^0$ (the order zero classical and Heisenberg calculi).
\vskip 6pt

\section{The Fredholm problem for the extended calculus}\label{section:four}

As soon as we have a pseudodifferential calculus on a compact manifold, there arises a Fredholm index problem.
The recipe is familiar.
I'll restrict the discussion to extended Heisenberg operators of order $(0,0)$ (order zero in both the classical and Heisenberg filtrations).
From the existence of the $C^*$-algebraic extension
\[ 0\to \KK\to \Psi_{eH}\stackrel{\sigma_{eH}}{\lra} S_{eH}\to 0,\]
we obtain the usual index theoretic conquences.
An operator $P\in \Psi_{eH}$ with a symbol $\sigma_{eH}(P)$ that is invertible in $S_{eH}$ is a Fredholm operator, and its index depends on the class of $\sigma_{eH}(P)$ in $K_1(S_{eH})$.
In particular, it only depends on the homotopy type of the extended symbol.

Following the general approach to index theorems elaborated by Baum and Douglas in \cite{BD80},  
the index problem for the extended Heisenberg calculus can be framed as follows.
If $P$ is a Fredholm operator in $\Psi_{eH}$,
then all commutators $[P, M_f]$ are compact.
Here $M_f$ denotes the operator on $L^2(M)$ of multiplication with a continuous function $f\in C(M)$.
In the extension above, this follows from the fact that $C(M)$ is central in the continous field $S_{eH}$.
Therefore the operator $P$ defines a class in the Kasparov $K$-homology group
\[ [P] \in KK(C(M),\CC) \cong K_0(M).\]
By general facts in topological $K$-homology it follows that there must be an index formula for $P$ of the form
\[ {\rm Index}\,P = \int {\rm Chern}(?)\wedge {\rm Todd}.\]
While this observation predicts the general form of the index formula, it certainly does not solve it. 
It is a non-trivial problem to compute the {\em topological} cycle (expressed in terms of $K$-theoretic data involving vector bundles etc.) that is to be inserted at the place of the question mark in the above formula .
The ``General index problem'' as formulated in \cite{BD80} is to compute the topological $K$-homology cycle that corresponds to the analytically defined cycle $[P]\in K_0(M)$.  
Equivalently, one can try to identify its Poincar\'e dual in the $K$-theory group $K^0(T^*M)$.
In any case, the problem is to compute such a class in purely {\em topological} terms.

In \citelist{\cite{vE10a} \cite{vE10b}} we presented a solution of this problem for Fredholm operators in the Heisenberg algebra. 
We will now extend those results to the extended Heisenberg algebra.
We achieve this by taking a fresh look at the problem,
introducing new ideas that shed an interesting light on the formula obtained in \cite{vE10b}.
As will become evident in what follows, the solution presented here as well as in \cite{vE10b} for the Heisenberg algebra itself only holds for {\em scalar operators}. 
Surprisingly, it does not seem possible to extend the solution to operators that act on sections in vector bundles! 
This is surprising because in most such cases the introduction of vector bundles merely introduces some extra overhead in the notation, and does not require any truly new ideas or methods.
But as will see, the index problem for {\em scalar} Fredholm operators in the exetended Heisenberg calculus has an elegant topological solution that does {\em not} work for operators acting on vector bundle sections.
\footnote{In this context it is worth pointing out that, while a scalar {\em elliptic} differential operator always has index zero, there {\em are} scalar Heisenberg differential operators with non-zero index.
The standard examples are second order operators of the form $\Delta_H+icT$,
where $\Delta_H$ is a sublaplacian and $T$ the Reeb field.}

\section{The Hermite index theorem}\label{section:five}

One general principle that we learned from our analysis of the index problem for the Heisenberg algebra (as presented, specifically, in \cite{vE10b}) is that the solution of this problem conforms, in its overall structure and in the breakdown of the proof into major steps, to the structure theory of the $C^*$-algebra of order zero operators.

Let us start by considering the Heisenberg algebra.
An obvious decomposition series for the $C^*$-algebra $\Psi_H$ would consist of the two sequences
\begin{align*}
0\to \KK\to &\Psi_H\stackrel{\sigma_{H}}{\lra} S_{H} \to 0,\\
0\to \KK(V^{BF})\oplus \KK(V^{BF}) \to &S_{H}\to C(S^*H) \to 0
\end{align*}
This decomposition series resolves the $C^*$-algebra $\Psi_{H}$ into factors that are Morita equivalent to commutative algebras,
which means that their $K$-theory can be identified with the topological $K$-theory of their spectrum.
For this reason such a resolution is a key ingredient in the solution of the index problem.

The length of the decomposition series (the number of sequences that is needed) is related to the number of Hausdorff strata in the spectrum of $\Psi_{H}$ (the $T_0$ topological space of irreducible unitary representations).
There are three such strata, and therefore two sequences are necessary.
But we can certainly effect the decomposition in a different order, namely as
\begin{align*}
0\to I_H\to &\Psi_H\to C(S^*H) \to 0,\\
0\to \KK\to &I_H\to \KK(V^{BF})\oplus \KK(V^{BF}) \to 0
\end{align*}
The ideal $I_H$ is simply {\em defined} to be the kernel of the equatorial symbol,
and it is referred to as the ideal of {\em Hermite operators}.
It turns out that this way of decomposing $\Psi_H$ is the more fruitful one.

Following this decomposition of the algebra $\Psi_H$, the solution of the index problem proceeds in two steps.
First one solves the problem for the Hermite ideal $I_H$.
One way to achieve this is by reducing the problem for Hermite operators to the Toeplitz index problem solved by Boutet de Monvel \cite{Bo79}.
This reduction is easily achieved using the tools of $K$-homology.
First, let $I_{H, +}$ denote the ideal in $I_H$ consisting of those Hermite operators whose lower hemispherical symbol $\sigma_{-}$ vanishes.
For this ideal we have a slightly simpler sequence 
\[ 0\to \KK\to I_{H, +}\stackrel{\sigma_+}{\lra} \KK(V^{BF}) \to 0\]
There is a natural inclusion of $C(M)$ into the symbol algebra $\KK(V^{BF})$.
This inclusion is constructed by means of the canonical section of ``vacuum vectors'' in the Bargmann-Fok Hilbert bundle $V^{BF}$. 
The Bargmann-Fok space $V^{BF}_x$ at a point $x\in M$ contains a canonical vacuum vector, namely the holomorphic function $1$ on $H^{1,0}$.
Let $s\in \KK(V^{BF})$ denote the family of rank one projectors onto the vacuum.
We then have the natural inclusion
\[ C(M) \to \KK(V^{BF})\;\colon\; f\mapsto fs.\]
As follows from the consideration of the vacuum vectors in \cite{Epxx}, 
the symbols of the form $fs\in \KK(V^{BF})$ corresponds exactly to the symbols of  Toeplitz operators on the contact manifold $M$,
because $s$ is, in fact, the Heisenberg symbol of the (generalized) Szeg\"o projector on $M$.
(The Szeg\"o projector is an order zero pseudodifferential operator in the Heisenberg calculus).
The Toeplitz extension is therefore embedded in the Hermite extension,
\[ \xymatrix{    0 \ar[r] 
               & \KK \ar[r]\ar[d] 
               & \mathcal{T} \ar[r]\ar[d] 
               & C(M) \ar[r]\ar[d] 
               & 0 \\
                 0 \ar[r]
               & \KK \ar[r] 
               & I_{H,+} \ar[r] 
               & \KK(V^{BF}) \ar[r]  
               & 0. }
\]
The inclusion $C(M)\to \KK(V^{BF})$ induces an isomorhism in $K$-homology of the two Morita equivalent $C^*$-algebras.
It is convenient to use the BDF realization of $K$-homology here \cite{Do77}, 
\[ {\rm Ext}(M) \cong {\rm Ext}(\KK(V^{BF})).\]
As is clear from the inclusion of short exact sequences,
the Toeplitz extension $[\mathcal{T}]\in {\rm Ext}(M)$ corresponds, 
under this Morita equivalence,
to the (positive) Hermite ideal $[I_{H,+}]\in {\rm Ext}(\KK(V^{BF}))$.
In this way the Hermite index problem reduces quite naturally and immediately to the Toeplitz index problem.

Taking the Toeplitz index theorem as given,
it follows that if $P$ is a Fredholm operator with $P-1\in I_{H,+}$
then we have
\[ {\rm Index}\,P = \int_M {\rm Ch}(\sigma_{H,+}(P))\wedge {\rm Td}(M).\]
Here $\sigma_{H,+}(P)$ defines a toplogical cycle in $K^1(M)$.
The ``vector bundle'' for this cycle is the Hilbert bundle $V^{BF}$ of Bargmann-Fok spaces,
while the invertible symbol $\sigma_{H,+}(P)$ defines an automorphism of this bundle.
See \cite{vE10b} for a detailed description of this $K$-theory class.

This solves the index problem for the Hermite ideal.
The proof outlined here differs from that presented in \cite{vE10b},
and we feel that the approach taken here greatly simplifies our understanding of the result.
Nevertheless, the method of \citelist{\cite{vE10a} \cite{vE10b}} has the great advantage that it does not rely on Boutet de Monvel's theorem, but rather has that theorem as a corollary.
We would like to point out here that Boutet de Monvel's theorem is proven, originally, for contact manifolds (or, rather, CR manifolds) that arise as pseudoconvex boundaries of complex domains. 
(An elegant proof using relative $K$-homology is found in \cite{BDT89}.)
Our approach in \citelist{\cite{vE10a} \cite{vE10b}} proves the Toeplitz index theorem for {\em arbitrary} (co-oriented) closed contact manifolds.

Nevertheless, our aim here is to emphasize the method of reduction of the index problem for $\Psi_H$ by means of its decomposition series, and in this approach the Toeplitz index theorem emerges as the end point of the reduction
(once we have explained how to reduce the index problem for $\Psi_H$ to $I_H$).
We now turn to the solution of the index problem for the extended Heisenberg algebra, and its reduction to the Hermite index problem.

\section{The index problem in the extended Heisenberg algebra}\label{section:six}

The second step in the solution of the index problem for the Heisenberg algebra presented in \cite{vE10b} involved a reduction of the full problem to the Hermite problem.
We have found a new and more transparant way to achieve this reduction.
As it turns out, our new approach applies to the extended Heisenberg algebra as well as to the Heisenberg algebra proper.
In this section we present the key idea of this paper.
\vskip 6pt

Let $P$ be a bounded Fredholm operator on $L^2(M)$.
If $C\colon L^2(M)\to L^2(M)$ denotes complex conjugation,
we define the {\em transpose} $P^\dagger$ of the operator $P$ as
\[ P^\dagger = C P^* C.\]
Observe that the map $P\mapsto P^\dagger$ is complex linear.
The transpose $P^\dagger$ is Fredholm as well, and
\[ {\rm Index}\,P^\dagger = -\,{\rm Index}\, P.\]
We apply this construction to order zero operators in the extended Heisenberg algebra, and observe the effect of transposition on the symbol.
\begin{lemma}\label{lemma:one}
The extended Heisenberg symbol $\sigma^\dagger = \sigma_{eH}(P^\dagger)$
of the transpose $P^\dagger$ of an order zero operator $P\in \Psi_{eH}$ is related to the symbol $\sigma=\sigma_{eH}(P)$ by the formula
\[ \sigma^\dagger(x,\xi) = \sigma(x,-\xi).\]
Here 
$x\in M$, $\xi\in S^*_{eH}M_x$.
\end{lemma}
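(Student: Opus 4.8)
The plan is to trace the effect of transposition through the layers of the symbol construction: first at the level of Schwartz kernels, then through the geometric normal-form map $h$ and the Fourier transform defining the amplitude, and finally by restriction to the boundary $S^*_{eH}M$. To begin, I would compute the kernel of $P^\dagger=CP^*C$. If $P$ has Schwartz kernel $k(x,y)$, then $P^*$ has kernel $\overline{k(y,x)}$, and the two complex conjugations contributed by $C$ on either side cancel against the conjugation in the adjoint, leaving the honest transpose
\[ k^\dagger(x,y)=k(y,x).\]
(This cancellation is exactly why $P\mapsto P^\dagger$ is complex linear rather than antilinear.) Writing $S\colon M\times M\to M\times M$ for the flip $S(p,q)=(q,p)$, this says $k^\dagger=k\circ S$.

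The crucial observation is that the map $h$ is built symmetrically about the diagonal, with the two endpoints placed at $\pm y/2$, so that interchanging the factors of $M\times M$ is identical to reversing the tangent vector:
\[ S\circ h(x,y)=({\rm exp}^\nabla_x(-y/2),\,{\rm exp}^\nabla_x(y/2))=h(x,-y).\]
Choosing the cut-off $\phi$ symmetric, $\phi\circ S=\phi$ (which we may always arrange, and which in any case changes the amplitude only by a smoothing term invisible to the symbol), we get $\phi k^\dagger=(\phi k)\circ S$, and hence $h_*(\phi k^\dagger)(x,y)=h_*(\phi k)(x,-y)$ after pulling back by $h$. Substituting $y\mapsto-y$ in the defining Fourier integral, the fibre $T_xM$ and its measure being invariant under this reflection, converts the reversal of $y$ into a reversal of $\xi$:
\[ a^\dagger(x,\xi)=\int_{y\in T_xM} e^{-i\ang{y,\xi}}\,h_*(\phi k)(x,-y)=\int_{y\in T_xM} e^{i\ang{y,\xi}}\,h_*(\phi k)(x,y)=a(x,-\xi).\]
Thus, at the level of amplitudes, transposition is precisely the fibrewise antipodal map $\xi\mapsto-\xi$ on $T^*M$.

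It remains to pass from amplitude to symbol. Both defining functions $\rho_R$ and $\rho_H$ are even in $\xi$, so the antipodal map extends smoothly across the relevant compactifications and carries the boundary $S^*_{eH}M$ to itself; on the boundary it interchanges the two hemispheres $S^*_{H,+}M$ and $S^*_{H,-}M$ and reflects the classical cylinder $S^*H\times[-\infty,+\infty]$, flipping its two ends. Since for an order $(0,0)$ operator the symbol $\sigma_{eH}(P)$ is simply the boundary restriction of the amplitude $a(x,\xi)$, restricting the identity $a^\dagger(x,\xi)=a(x,-\xi)$ to $S^*_{eH}M$ gives $\sigma^\dagger(x,\xi)=\sigma(x,-\xi)$, as claimed.

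I expect the genuine content to be concentrated in the two identities $k^\dagger=k\circ S$ and $S\circ h=h\circ\iota$ (with $\iota$ the fibrewise antipodal on $TM$); everything downstream is a change of variables. The one step that calls for care, and the place I would regard as the main obstacle, is the bookkeeping on the blown-up boundary $S^*_{eH}M$: one must verify that the antipodal map genuinely descends to $S^*_{eH}M$ and that the three pieces of the symbol transform as stated. This is ultimately forced by the evenness of $\rho_R$ and $\rho_H$, but it is where the geometry of the extended compactification, rather than a routine manipulation, actually enters.
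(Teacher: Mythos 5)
Your argument is correct and follows exactly the route the paper takes: it establishes $k^\dagger(x,y)=k(y,x)$ and then declares that the formula ``follows immediately by definition of the principal symbol.'' Your write-up simply fills in the details the paper leaves implicit --- the symmetry $S\circ h(x,v)=h(x,-v)$ of the normal-form map, the change of variables in the Fourier integral, and the evenness of $\rho_R$ and $\rho_H$ ensuring the antipodal map descends to $S^*_{eH}M$ --- all of which check out.
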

\begin{proof}
If $k(x,y)$ is the Schwartz kernel of $P$ then the Schwartz kernel $k^\dagger(x,y)$ of $P^\dagger$ is given by $k^\dagger(x,y)=k(y,x)$.
The proposition now follows immediately by definition of the principal symbol.
\end{proof}

This simple operation can be used to effect a reduction of the index problem of the extended Heisenberg algebra to the Hermite ideal.
Following our general philosophy, we should first study the Type I structure of the $C^*$-algebra $\Psi_{eH}$.
Like the Heisenberg algebra, the extended Heisenberg algebra decomposes according to only two short exact sequences.
To obtain the first sequence,
we consider the `classical region' of the extended symbol.
The classical part of the extended symbol gives rise to the sequence,
\[ 0\to I_H\to \Psi_{eH}\stackrel{\sigma_0}{\lra} C(S^*H\times [-\infty, +\infty])\to 0.\]
The ideal $I_H$ that is the kernel of $\sigma_0$ consists of operators in $\Psi_{eH}$ whose symbol vanishes identically in the classical region.
It follows that these operators are elements in the Heisenberg algebra.
They are, in fact, precisely the Hermite operators.
As our second sequence in the decomposition of $\Psi_{eH}$
we can therefore simply use the same sequence as before,
\[ 0\to\KK\to I_H \to \KK(V^{BF})\oplus \KK(V^{BF})\to 0.\]
Following this structure of the Type I $C^*$-algebra $\Psi_{eH}$ 
and its decomposition by two extensions,
one should aim to reduce the index problem for $\Psi_{eH}$ to that of $I_H$.
This reduction can be achieved quite easily by making clever use of the transpose operation.

Observe that any linear map in the fibers of $T^*M$ induces a diffeomorphism of its extended boundary $S^*_{eH}M$.
Let ${\rm id}_H$ and ${\rm id}_N$  denote the identity maps on $H^*$ and $N^*$ respectively.
Then ${\rm id}_H\oplus -\,{\rm id}_N$ is a linear map on $T^*M$, 
and we can extend this map by continuity to $S^*_{eH}M$.
If $\sigma$ is a function on $S^*_{eH}M$,
let $\sigma^{op}$ denote the function 
\[ \sigma^{op} = \sigma\circ({\rm id}_H\oplus {\rm -\,id_N}).\] 
\begin{lemma}\label{lemma:two}
If $\sigma=\sigma^{0,0}_{eH}(P)$ is the extended Heisenberg symbol of an order zero operator $P\in \Psi_{eH}$,
and $\sigma^\dagger$ is the symbol of its transpose $P^\dagger$,
then $\sigma^\dagger$ is homotopic to $\sigma^{op}$, through a family of invertible symbols in $S_{eH}$.
\end{lemma}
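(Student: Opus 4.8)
The plan is to realize the passage from $\sigma^{op}$ to $\sigma^\dagger$ as the action of a path of fibrewise \emph{symplectic} transformations, each of which induces an automorphism of the symbol $C^*$-algebra $S_{eH}$, so that the homotopy is forced to run through invertibles.

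First I would record the elementary relation between the two functions. By Lemma~\ref{lemma:one} we have $\sigma^\dagger(x,\xi)=\sigma(x,-\xi)$, while by definition $\sigma^{op}(x,\xi)=\sigma(x,({\rm id}_H\oplus-{\rm id}_N)\xi)$. Writing $\xi=(\xi_H,\xi_N)$ and composing, one checks at once that
\[ \sigma^\dagger = \sigma^{op}\circ(-{\rm id}_H\oplus{\rm id}_N).\]
Thus $\sigma^\dagger$ and $\sigma^{op}$ differ precisely by the action of $-{\rm id}_H$ on the contact fibres $H^*$, with $N^*$ left fixed. The key point is that $-{\rm id}_H$ preserves the symplectic form $d\theta$, since $d\theta(-u,-v)=d\theta(u,v)$; hence it lies in the symplectic group of each fibre. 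Next I would connect $-{\rm id}_H$ to ${\rm id}_H$ within that group, using the globally chosen compatible complex structure $J$ (with $J^2=-1$) on $H^*$. Set
\[ T_t = \cos(\pi t)\,{\rm id}_H + \sin(\pi t)\,J, \qquad t\in[0,1].\]
Each $T_t$ acts on $(H^*,J)$ as multiplication by $e^{i\pi t}$, hence is unitary, in particular symplectic, with $T_0={\rm id}_H$ and $T_1=-{\rm id}_H$; and since a compatible $J$ exists as a global smooth section, $\{T_t\}$ is a smooth family of fibrewise symplectic bundle maps.

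The crux of the argument is then to check that each $T_t\oplus{\rm id}_N$ induces a $C^*$-automorphism $\alpha_t$ of $S_{eH}$, and to set $\sigma_t:=\alpha_t(\sigma^{op})=\sigma^{op}\circ(T_t\oplus{\rm id}_N)$. On the two hemispheres, precomposition with the symplectic map $T_t$ is an automorphism of the Weyl algebra $\mathcal{W}(V^{BF})$; this is exactly the crucial fact recorded in Section~\ref{section:two}, namely that symplectic maps of $H^*$ act as automorphisms of the sharp products $\#_+$ and $\#_-$. On the classical stratum $S^*H\times[-\infty,+\infty]$, unitarity of $T_t$ is convenient: it preserves the unit sphere $S^*H$ exactly and fixes the $[-\infty,+\infty]$ coordinate, so it acts simply as a diffeomorphism of the $S^*H$ factor, inducing an automorphism of the commutative algebra $C(S^*H\times[-\infty,+\infty])$. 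The one thing that remains is compatibility with the gluing conditions defining $S_{eH}$: one must verify that the equatorial symbol map $\sigma_0$ intertwines the Weyl automorphism on the hemispheres with the diffeomorphism action on the equator, so that the relations $\sigma_0(a)=c(+\infty)$ and $\sigma_0(b)=c(-\infty)$ are preserved. I expect this naturality across the three strata to be the main technical point; it should follow from the functoriality of the Bargmann-Fok representation under the symplectic group (the metaplectic action), whose restriction to the boundary sphere $S^*H$ reduces to precomposition with $T_t|_{S^*H}$.

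Finally, since $\sigma_0=\sigma^{op}$ and $\sigma_1=\sigma^\dagger$, the family $\{\sigma_t\}$ is a continuous homotopy from $\sigma^{op}$ to $\sigma^\dagger$. It remains to see that it runs through \emph{invertible} symbols. As $\sigma^\dagger=\sigma_{eH}(P^\dagger)$ is invertible in $S_{eH}$ (the transpose of a Fredholm operator is Fredholm), and each $\alpha_t$ is an automorphism, we deduce that $\sigma^{op}=\alpha_1^{-1}(\sigma^\dagger)$ is invertible, and therefore $\sigma_t=\alpha_t(\sigma^{op})$ is invertible for every $t$. Thus the homotopy lies entirely in the invertible elements of $S_{eH}$, which is exactly the claim. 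Note this deduction neatly avoids checking invertibility of $\sigma^{op}$ directly, which is subtle because the auxiliary map $\,{\rm id}_H\oplus-{\rm id}_N$ interchanges the two hemispheres.
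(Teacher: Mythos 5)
Your proposal is correct and follows essentially the same route as the paper: the paper's proof also uses the compatible complex structure $J$ to form the rotation $\cos(\pi t)+J\sin(\pi t)$ interpolating between ${\rm id}_H$ and $-{\rm id}_H$ through symplectic maps, which act as automorphisms of the sharp products and hence of $S_{eH}$, yielding the homotopy $\sigma\circ(\alpha_t\oplus-{\rm id}_N)$ from $\sigma^{op}$ to $\sigma^\dagger$. Your additional checks (compatibility with the gluing along the equator, and deducing invertibility of each $\sigma_t$ from that of $\sigma^\dagger$ via the automorphisms) are details the paper leaves implicit, not a different argument.
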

\begin{proof}
The proof relies crucially on the existence of a complex structure in the fibers of the contact hyperplane bundle $H$, compatible with the symplectic form $d\theta$.
With the help of the complex structure $J$ we can form the homotopy of maps
\[ \alpha_t = \cos{\pi t}+J\sin{\pi t},\; t\in [0,1].\]
Because $J$ is compatible with the symplectic form $d\theta$ in each fiber of $H$,
it follows that $\alpha_t$ is an automorphism for the sharp products $\#$ in the two hemispheres of the Heisenberg region in the extended symbol algebra.
(See the remark following the formula for $\#$).
Therefore $\alpha_t$ induces an automorphism of the extended symbol algebra $S_{eH}$.
Observe that $\alpha_0={\rm id}_H$ while $\alpha_1=-\,{\rm id}_H$.

Recall that $\sigma^{op} = \sigma\circ (\,{\rm id}_H\oplus -\,{\rm \,id_N})$, while  $\sigma^\dagger = \sigma\circ (-\,{\rm id}_H\oplus -\,{\rm \,id_N})$ (Lemma \ref{lemma:one}).
Therefore $\sigma_t=\sigma\circ (\alpha_t\oplus -{\rm id}_N)$ provides the required homotopy.
\end{proof}

\begin{corollary}\label{corollary}
If $\sigma\in S_{eH}$ is an invertible extended Heisenberg symbol such that $\sigma^{op}=\sigma$, then the Fredholm index of any operator with symbol $\sigma$ is zero.
\end{corollary}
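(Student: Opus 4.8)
The plan is to let the two facts already in hand collide: that the Fredholm index of an operator in $\Psi_{eH}$ depends only on the homotopy class of its symbol in $K_1(S_{eH})$, and that transposition reverses the sign of the index. The hypothesis $\sigma^{op}=\sigma$ is precisely what converts the homotopy $\sigma^\dagger \simeq \sigma^{op}$ of Lemma \ref{lemma:two} into a homotopy $\sigma^\dagger \simeq \sigma$, and it is this identification that forces the index to vanish.

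Concretely, I would fix a Fredholm operator $P\in\Psi_{eH}$ with $\sigma_{eH}(P)=\sigma$; such a $P$ is automatically Fredholm since $\sigma$ is invertible in $S_{eH}$. Its transpose $P^\dagger=CP^*C$ is again an order zero operator in the extended Heisenberg calculus, and by Lemma \ref{lemma:one} its symbol is $\sigma_{eH}(P^\dagger)=\sigma^\dagger$, with ${\rm Index}\,P^\dagger=-\,{\rm Index}\,P$. Next I would invoke Lemma \ref{lemma:two}: $\sigma^\dagger$ is homotopic to $\sigma^{op}$ through a family of \emph{invertible} symbols in $S_{eH}$. Under the standing hypothesis $\sigma^{op}=\sigma$, this says that $\sigma^\dagger$ and $\sigma$ lie in the same path component of the invertibles of $S_{eH}$, so $[\sigma^\dagger]=[\sigma]$ in $K_1(S_{eH})$.

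Finally, because the index is constant on $K_1(S_{eH})$ (as recorded in Section \ref{section:four}), the equality $[\sigma^\dagger]=[\sigma]$ yields ${\rm Index}\,P^\dagger={\rm Index}\,P$. Comparing with the sign relation ${\rm Index}\,P^\dagger=-\,{\rm Index}\,P$ gives $2\,{\rm Index}\,P=0$, whence ${\rm Index}\,P=0$. Since $P$ was an arbitrary operator with symbol $\sigma$, the claim follows.

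I do not expect a genuine obstacle here: all the analytic and algebraic content lives in Lemma \ref{lemma:two}, and the corollary is a formal consequence. The one point to state with care is that the homotopy invariance of the index is invariance through the invertibles of the quotient $C^*$-algebra $S_{eH}$ (equivalently, constancy on $K_1(S_{eH})$), and that the path supplied by Lemma \ref{lemma:two} stays inside those invertibles; this is exactly what licenses equating the two index computations for $P$ and $P^\dagger$.
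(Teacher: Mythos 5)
Your argument is correct and is exactly the reasoning the paper intends (the corollary is stated without a written proof precisely because it is this formal consequence of Lemmas \ref{lemma:one} and \ref{lemma:two} together with ${\rm Index}\,P^\dagger=-\,{\rm Index}\,P$ and homotopy invariance of the index through invertible symbols). Nothing is missing.
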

With these preparations, we can now state our index formula for the extended Heisenberg calculus.
Note that the relative $K$-theory group $K_1(\mathcal{W}(V^{BF}), \KK(V^{BF}))$ is isomorphic to the nonunital $K$-theory $K_1(\KK(V^{BF}))$ (by excision),
which, in turn, is isomorphic to $K_1(C(M))\cong K^1(M)$ (by Morita equivalence).
\begin{theorem}\label{theorem}
Let $(M, H)$ be a closed co-oriented contact manifold,
and $P\in \Psi_{eH}$ a scalar Fredholm operator of order $(0,0)$ in the extended Heisenberg algebra associated to $(M, H)$.
We denote $\sigma_{\pm}=\sigma_{\pm}(P)$.
Then the quotient $\sigma_{+}\,\#_+\, (\sigma_{-})^{-1}$ of the two Heisenberg parts $\sigma_+$ and $\sigma_-$ of the extended symbol (using the $\#_+$ product) defines a $K$-theory class
\[ [\sigma_{+}\,\#\, (\sigma_{-})^{-1}] \in K_1\,(\mathcal{W}(V^{BF}),\, \KK(V^{BF}))\cong K^1(M).\]
Moreover,
\[ {\rm Index}\,P = \int_M {\rm Ch}(\sigma_{+}\#\, [\sigma_{-}]^{-1})\wedge {\rm Td}(M).\]
In particular, the index of $P$ only depends on the values of the extended symbol in the Heisenberg region of $S^*_{eH}M$.
\end{theorem}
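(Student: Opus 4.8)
The plan is to peel the symbol apart with the transpose operation until only its upper hemisphere survives, and then to invoke the Hermite index theorem of Section~\ref{section:five}. Recall that the Fredholm index descends to a homomorphism ${\rm Index}\colon K_1(S_{eH})\to\ZZ$. I first dispose of the $K$-theoretic claim. Writing the invertible extended symbol as $\sigma=(\sigma_+,\sigma_-,c)$, I view both hemisphere symbols as elements of the single algebra $\mathcal{W}(V^{BF})$, the second one re-quantized with the $\#_+$ product, so that $u:=\sigma_+\#_+(\sigma_-)^{-1}$ is invertible in $\mathcal{W}(V^{BF})$ with equatorial value $\sigma_0(u)=c(+\infty)\,c(-\infty)^{-1}$. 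The classical symbol $c$, rescaled by $c(-\infty)^{-1}$, is then a path in $GL(C(S^*H))$ running from $\sigma_0(u)$ to $1$; together $u$ and this path constitute a cycle for the relative group $K_1(\mathcal{W}(V^{BF}),\KK(V^{BF}))$, well defined up to homotopy, and under the isomorphisms recorded before the theorem this is the asserted class in $K^1(M)$.

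The heart of the argument is a factorization $\sigma=\sigma^{(1)}\,\sigma^{(2)}$ in $S_{eH}$, engineered so that one factor is invisible to the index while the other carries the relative class. I take $\sigma^{(2)}$ to be the symbol whose two hemisphere parts are \emph{both} the function $\sigma_-$ and whose classical part is the constant $\sigma_0(\sigma_-)$; it is manifestly invertible, and $\sigma^{(1)}:=(u,1,c\cdot\sigma_0(\sigma_-)^{-1})$ is the complementary factor, which has trivial lower hemisphere. The design of $\sigma^{(2)}$ is dictated entirely by Corollary~\ref{corollary}: since its two hemisphere symbols coincide and its classical part is constant, it is fixed by the involution, $(\sigma^{(2)})^{op}=\sigma^{(2)}$, and hence ${\rm Index}$ of any operator with this symbol is zero. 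Consequently ${\rm Index}\,P={\rm Index}[\sigma^{(1)}]$, and a direct check shows that $\sigma^{(1)}$ represents the same relative class $[u]$.

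What remains is to compute ${\rm Index}[\sigma^{(1)}]$, and here the earlier work takes over. Operators with trivial lower hemisphere have index depending only on the relative class in $K_1(\mathcal{W}(V^{BF}),\KK(V^{BF}))$; by the excision isomorphism $K_1(\mathcal{W}(V^{BF}),\KK(V^{BF}))\cong K_1(\KK(V^{BF}))$ I may homotope $\sigma^{(1)}$, keeping its lower hemisphere trivial, to a representative whose classical part is also trivial, that is, to an operator $Q$ with $Q-1\in I_{H,+}$ and $\sigma_{H,+}(Q)$ representing $[u]$. The Hermite index theorem of Section~\ref{section:five} now yields ${\rm Index}\,Q=\int_M{\rm Ch}([u])\wedge{\rm Td}(M)$, and assembling the three reductions gives the stated formula; the final assertion, that only the Heisenberg region matters, is visible from the fact that $[u]$ involves $\sigma_\pm$ alone.

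The step I expect to be delicate is the factorization. The two hemispheres are \emph{algebraically independent} and carry the inequivalent sharp products $\#_+$ and $\#_-$, so identifying both with one copy of $\mathcal{W}(V^{BF})$ forces a change of quantization on the lower symbol; the care needed is to verify that, with this bookkeeping, $\sigma^{(2)}$ is genuinely $op$-invariant while the upper part of $\sigma^{(1)}$ comes out to be exactly $\sigma_+\#_+(\sigma_-)^{-1}$ and not some twist of it. A secondary technical point is that the homotopy of relative $K_1$-cycles in the last step must be lifted to a homotopy through invertible symbols of $S_{eH}$, which relies on the surjectivity of $\sigma_0\colon\mathcal{W}(V^{BF})\to C(S^*H)$ and the lifting of invertibles along it.
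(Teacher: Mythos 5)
Your proposal is correct and follows essentially the same route as the paper: you cancel an $op$-invariant factor built from $\sigma_-$ using Corollary~\ref{corollary}, reduce to a symbol with trivial lower hemisphere whose upper part is $\sigma_+\#_+(\sigma_-)^{-1}$, contract the classical part along the path given by $c$, and invoke the Hermite/Toeplitz index theorem, flagging the same delicate points ($\#_+$ versus $\#_-$ bookkeeping and lifting the homotopy through invertibles). The only cosmetic difference is that the paper's $op$-symmetric comparison symbol $\tilde{\sigma}$ is chosen to agree with $\sigma$ on the entire lower half of $S^*_{eH}M$ (so its classical part is the reflected $c$ rather than your constant $c(-\infty)$), which changes nothing essential.
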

\begin{proof}
If $\sigma=\sigma_{eH}(P)$ is the extended Heisenberg symbol of $P$,
then let $\tilde{\sigma}$ denote the function on $S^*_{eH}M$ that agrees with $\sigma$ on the lower half of $S^*_{eH}M$, and that is extended to the upper half in such a way that $\tilde{\sigma}^{op}=\tilde{\sigma}$.
By the ``lower half'' of $S^*_{eH}M$ we mean, of course, the lower half of the classical region $S^*H\times [-\infty, 0]$ together with the lower hemisphere $S^*_{H,-}M$ in the Heisenberg region.

The map $\sigma\mapsto \sigma^{op}$ commutes with the taking of inverses in the symbol algebra $S_{eH}$, because it is an {\em anti-automorphism} of this algebra.
Therefore if $\tilde{\sigma}^{op}=\tilde{\sigma}$ then the same symmetry holds for the inverse $\tilde{\sigma}^{-1}$.
Applying Corollary \ref{corollary} we conclude that the index of $P$ is the same as the index of the quotient $\tau = \sigma \tilde{\sigma}^{-1}$ in $S_{eH}$ (i.e., of an operator in $\Psi_{eH}$ whose extended Heisenberg symbol is $\tau$).

By construction, the function $\tau$ is identically $1$ on the lower half of  $S^*_{eH}M$.
This is true, in particular, for its restriction to the sphere bundle $S^*H\times \{0\}$ at the center of the classical region.
Therefore the restriction of $\tau$ to the corner $S^*H\times \{+\infty\}$
(which is also the boundary of the upper hemisphere in the Heisenberg region)
is {\em homotopic to $1$}.
As an explicit homotopy we can choose the restriction of $\tau$ to $S^*H\times [0,+\infty]$.

The restriction of $\tau$ to the upper hemisphere is precisely $\tau_+ = \sigma_{+}\#(\sigma_{-})^{-1}$. 
From what we have seen, 
the function $\tau_+$ corresponds to an invertible element in the $C^*$-algebra $\mathcal{W}(V^{BF})$ for which the restriction to the boundary $S^*H$ is homotopic to $1$ (as an invertible function in $C(S^*H)$).
Remembering the essential extension
\[ 0\to \KK(V^{BF})\to \mathcal{W}(V^{BF}) \to C(S^*H)\to 0,\]
we see that $\tau_+$ is the representative of a class in the relative $K$-theory group
\[ [\tau_+] \in K_1(\mathcal{W}(V^{BF}), \KK(V^{BF}))\cong K_1(\KK(V^{BF})).\]
The homotopy from the restriction of $\tau$ to the corner $S^*H\times \{+\infty\}$
to its restriction to $S^*H\times \{0\}$
extends to a homotopy of the complete symbol $\tau$
to a symbol $\tau'$ 
that is identically $1$ in the entire classical region $S^*H\times [-\infty,+\infty]$, as well as on the lower hemisphere in the Heisenberg region, like $\tau$ itself.
Therefore $\tau'$ is the extended Heisenberg symbol of an operator in the 
(positive) Hermite ideal $I_{H,+}$, and so we know how to compute its index,
\[ {\rm Index}\,P = \int_M {\rm Ch}(\tau')\wedge {\rm Td}(M).\]
But since $\tau$ and $\tau'$ define the same class in $K^1(M)$,
we can apply the same formula to $\tau_+$,
\[ {\rm Index}\,P = \int_M {\rm Ch}(\tau_+)\wedge {\rm Td}(M).\]
\end{proof}

\vskip 6pt
\noindent {\bf Remark 1.}
The class $[\sigma_{+}\,\#\, (\sigma_{-})^{-1}]$
really defines a {\em topological} cycle in the group $K^1(M)$,
i.e., a complex vector bundle over $M$ together with an automorphism of that bundle.
The vector bundle is of the form
\[ V^N = \bigoplus_{j=0}^N {\rm Sym}^j H^{1,0}\subset V^{BF}\]
for a sufficiently large value of $N$ (depending on the operator $P$).
The automorphism of $V^N$ is obtained from
the family of invertible operators $\sigma_{+}\,\#\, (\sigma_{-})^{-1}$
represented on the Bargmann-Fok Hilbert space $V^{BF}$.
The value of $N$ should be chosen large enough
to ensure that the restriction of $\sigma_{+}\,\#\, (\sigma_{-})^{-1}$
to $V^N$ is still invertible and homotopic to the original element in $K_1( \mathcal{W}(V^{BF}), \KK(V^{BF}))$.
The situation here is exactly the same as for the index formula for the Heisenberg algebra presented in \cite{vE10b}. We refer to that paper for a detailed discussion.

\vskip 6pt
\noindent {\bf Remark 2.}
It is obvious that the formula in \cite{vE10b} for hypoelliptic operators in the Heisenberg algebra is a special case of Theorem \ref{theorem}.
But Theorem \ref{theorem} can also be specialized to {\em elliptic} operators in the classical calculus.
One simply recovers a version of the Atiyah-Singer formula.
In this case $\sigma_{+}$ and $\sigma_{-}$ are simply the restrictions of the classical principal symbol of an elliptic operator $P$ to the two poles in the cosphere bundle $S^*M$,
and we can replace the sharp product $\#_+$ in the formula by a pointwise product.
The quotient $\sigma_+/\sigma_-$ is now, of course, just a map $M\to \CC\setminus \{0\}$, but it still defines a class 
\[ \left[\frac{\sigma_+}{\sigma_{-}}\right]\in K^1(M).\]
Thus, Theorem \ref{theorem} establishes an interesting formal similarity between the hypoelliptic index formula of \cite{vE10b} and the classical elliptic index formula of Atiyah and Singer.

\vskip 6pt
\noindent {\bf Remark 3.}
A strange feature of Theorem \ref{theorem} is that it is not evident how to generalize it to the case of operators that act on sections of vector bundles,
or even to the simpler case of `systems' of (scalar) operators.
To see what goes wrong,
let $P=(P_{ij})$ be a $k\times k$ system of operators in $\Psi_{eH}\otimes M_k\CC$.
The transpose $P^t$ is then the system $(P^t_{ji})$,
and the extended Heisenberg symbol $\sigma^t$ of $P^t$
is given by $\sigma^t(\xi) = \sigma(-\xi)^t$.
A significant difference with the scalar case is that we need to take the {\em matrix transpose} of $\sigma(-\xi)$ here.
It is for this reason that the key trick in the proof of Theorem \ref{theorem} no longer works. 
It is no longer possible to construct an `$op$'-symmetric symbol $\tilde{\sigma}$ that agrees with $\sigma$ on the lower half of the extended sphere (except in the special case where the restriction of $\sigma$ to the sphere $S^*H\times \{0\}$ takes value in symmetric matrices).

We believe that this is a fundamental problem for which there is no simple remedy. Our belief is founded on the role played by the transpose in $K$-homology.
In \cite{BDF77} Brown, Douglas and Fillmore consider the question of finding an explicit determination of inverses in the group ${\rm Ext}(X)=K_1(X)$.
Let $\HH$ be a separable Hilbert space, $\LL(\HH)$ the algebra of bounded operators on $\HH$, $\KK(\HH)$ the compact operators, and $\mathcal{Q}=\LL(\HH)/\KK(\HH)$ the Calkin algebra.
Suppose $\HH$ has a real structure (i.e., a complex conjugation),
so that transposition is defined on $\LL(\HH)$ and hence on $\mathcal{Q}$.
Suppose we have an extension
\[ 0\to \KK(\HH)\to A \to C(X)\to 0.\]
For the Busby invariant $\tau\colon C(X)\to \mathcal{Q}$ associated to this extension one can define the transpose $\tau^t(f) = C\tau(f)^*C$,
which defines a new element $[\tau^t]\in {\rm Ext}(X)$.
Transposition is thus an involution on ${\rm Ext}(X)=K_1(X)$.

Let $P$ be a Fredholm operator in the $C^*$-algebra $A$
with invertible symbol $\sigma\in C(X)$.
The observation that ${\rm Index}\,P^t=-\,{\rm Index}\,P$ can be expressed in terms  of the index pairing of the $K$-theory class $[\sigma]\in K^1(X)$ with the cycles $\tau$ and $\tau^t$,
\[ \ang{[\sigma], [\tau]} = -\,\ang{[\sigma], [\tau^t]}.\]
This may lead one to believe that $[\tau^t]=-[\tau]$ in $K_1(X)$.
However, this is false (See \cite[Remark~8.2]{BDF77}).
One problem is that if we replace $P$ by a {\em system} of operators $P=(P_{ij})$
whose symbol $\sigma$ is an element in $A\otimes M_k\CC$,
then the equality $\ang{[\sigma], [\tau]} = -\,\ang{[\sigma], [\tau^t]}$ no longer holds in general.

We believe that these $K$-homology considerations are intimitely tied up with the role of the operator transpose in our proof of Theorem \ref{theorem},
and that they point to an interesting difference between the index problem for scalar operators and the index problem for systems of operators (or vector bundle operators) in the Heisenberg and extended Heisenberg calculi.
It seems it may be worth trying to understand this better.

\bibliographystyle{amsplain}

\bibliography{MyBibfile}

\end{document}